\numberwithin{equation}{subsection} 
\numberwithin{figure}{subsection} 
\let\c@equation\c@figure
\newtheorem{theorem}[equation]{Theorem}
\newtheorem{corollary}[equation]{Corollary}
\theoremstyle{remark}
\newtheorem{remark}[equation]{Remark}
\newtheorem{question}[equation]{Question}
\theoremstyle{definition}
\newtheorem{defn}[equation]{Definition}
\newcommand\nc{\newcommand}
\nc\on{\operatorname}
\newcommand\fp{{\mathfrak p}}
\newcommand\fq{{\mathfrak q}}
\newcommand\fm{{\mathfrak m}}
\nc\Hom{\on{Hom}}
\nc\Sections{\on{Sections}}
\nc\Sym{\on{Sym}}
\nc\Spec{\on{Spec}}
\nc\Specm{\on{Specm}}
\nc\ul{\underline}
\nc{\dfp}{\overset{\cdot}{\fp}}
\nc{\dfq}{\overset{\cdot}{\fq}}
\nc{\dfm}{\overset{\cdot}{\fm}}
\begin{document}

\setcounter{tocdepth}{1}

\title{Vanishing for Frobenius Twists of Ample Vector Bundles}
\author{Daniel Litt}

\begin{abstract}
We prove several asymptotic vanishing theorems for Frobenius twists of ample vector bundles in positive characteristic.   As an application, we prove a generalization of the Bott-Danilov-Steenbrink vanishing theorem  for ample vector bundles on toric varieties.
\end{abstract}

\maketitle

\tableofcontents

\section{Introduction}
Let $k$ be a field, and $X$ a finite-type $k$-scheme.  Let $\mathscr{E}$ be a vector bundle on $X$.  In a beautiful series of papers \cite{arapura1, arapura2, arapura3}, Arapura studies a measure of the positivity of $\mathscr{E}$ --- the Frobenius amplitude of $\mathscr{E}$.  Arapura bounds the Frobenius amplitude of ample vector bundles in the case that $k$ has characteristic zero.  The purpose of this paper is to prove bounds on the Frobenius amplitude in \emph{positive} characteristic, under favorable hypotheses, and to give some applications to prove strong vanishing theorems on e.g.~toric varieties.
\begin{defn}[Frobenius Amplitude]
Let $k$ be a field of characteristic $p>0$ and $X$ a finite-type $k$-scheme.  If $\mathscr{E}$ is a vector bundle on $X$, we define $\phi(\mathscr{E})$ to be the least integer $l$ such that for any coherent sheaf $\mathscr{F}$ on $X$, there exists $n\gg 0$ such that $$H^i(X, \mathscr{E}^{(p^n)}\otimes \mathscr{F})=0$$ for all $i>l$.

Now $k$ be a field of characteristic $0$ and $X$ a finite-type $k$-scheme.  For $\mathscr{E}$ a vector bundle on $X$, we say that $\phi(\mathscr{E})<l$ if there exists a finite-type $\mathbb{Z}$-algebra $R$, a finite-type $R$-scheme $\mathcal{X}$, and a vector bundle $\widetilde{\mathscr{E}}$ on $\mathcal{X}$ such that 
\begin{enumerate}
\item there is a ring map $R\to k$ and isomorphisms $X\simeq \mathcal{X}_{k}, \mathscr{E}\simeq \widetilde{\mathscr{E}}_k$, and
\item $\phi(\widetilde{\mathscr{E}}_\mathfrak{q})<l$ for all closed points $\mathfrak{q}\subset \on{Spec}(R).$
\end{enumerate}
\end{defn}

See \cite[Section 1]{arapura1} for more details.  Arapura proves
\begin{restatable*}[Frobenius Amplitude of Ample Bundles, {\cite[Theorem 6.1]{arapura1}}]{theorem}{arapurathmrestate}\label{arapurathm}
Let $k$ be a field of characteristic zero and $X$ a projective variety over $k$.  Let $\mathscr{E}$ be an ample vector bundle on $X$.  Then $$\phi(\mathscr{E})<\on{rk}(\mathscr{E}).$$
\end{restatable*}
We give a simple new proof of this theorem in Section \ref{arapura-proof-section}.

This theorem implies several strong vanishing theorems.  It motivates the following question:
\begin{question}\label{mainquestion}
Let $X$ be a proper variety over a field $k$ of characteristic $p>0$.  Let $\mathscr{E}$ be an ample vector bundle on $X$.  Is $$\phi(\mathscr{E})<\on{rk}(\mathscr{E})?$$
\end{question}
The goal of this paper is to answer Question \ref{mainquestion} under favorable hypotheses --- in particular, we give a new short proof of Theorem \ref{arapurathm}, and answer Question \ref{mainquestion}, if $X$ lifts to $W_2(k)$, and admits a lift of Frobenius.

Explicitly, our main results is:
\begin{restatable*}[Main Theorem]{theorem}{liftablethm}\label{thm:liftable-twist}
Let $k$ be a perfect field of characteristic $p>0$ and $X$ a projective $k$-scheme admitting a flat lift $X_2$ over $W_2(k)$.  Let $\mathscr{E}$ be an ample vector bundle on $X$ with $\on{rk}(\mathscr{E})<p$, and suppose that for some $N>0$, $\mathscr{E}^{(p^N)}$ lifts to $X_2$.  Then $$\phi(\on{Sym}^{n-j}(\mathscr{E})\otimes \bigwedge^j(\mathscr{E}))\leq\on{rk}(\mathscr{E})-j$$ for any $n$.
\end{restatable*}
Taking $n=1, j=1$ we obtain $$\phi(\mathscr{E})< \on{rk}(\mathscr{E}),$$ answering Question \ref{mainquestion} in this case.

We view this as a Manivel-type vanishing theorem.  Unfortunately, the hypotheses are rather strong --- in particular, we must assume that $\mathscr{E}^{(p^N)}$ lifts for some $N>0$ (note that $N=0$ does not suffice).  This is a difficult condition to verify --- the main situation in which it may be checked is when $\mathscr{E}$ admits a lift to $X_2$, and $X_2$ admits a lift of Frobenius:
\begin{restatable*}[Main Corollary]{corollary}{frobcor}\label{thm:frob-lift}
Let $k$ be a perfect field of characteristic $p>0$ and $X$ a projective $k$-scheme admitting a flat lift $X_2$ over $\mathbb{Z}/p^2\mathbb{Z}$; suppose $X_2$ admits a lift of absolute Frobenius.  Let $\mathscr{E}$ be an ample vector bundle on $X$ with $\on{rk}(\mathscr{E})<p$, and suppose that $\mathscr{E}$ lifts to $X_2$.  Then $$\phi(\on{Sym}^{n-j}(\mathscr{E})\otimes \bigwedge^j(\mathscr{E}))\leq\on{rk}(\mathscr{E})-j$$ for any $n$.
\end{restatable*}
The condition that $X_2$ admits a lift of Frobenius is quite restrictive (see e.g. \cite{migliorini}), but it holds for e.g.~toric varieties \cite{buchetal} and canonial lifts of ordinary Abelian varieties.

As an application of these methods, we prove a generalization of the Bott-Steenbrink-Danilov vanishing theorem:
\begin{restatable*}[Generalization of Bott-Steenbrink-Danilov]{theorem}{bottetc}\label{thm:bott-etc}
Let $X$ be a normal projective toric variety over a perfect field $k$, and $\mathscr{E}_1, \cdots, \mathscr{E}_m$ ample vector bundles on $X$.  Let $j: U\hookrightarrow X$ be the inclusion of the smooth locus.  Then if
\begin{enumerate}
\item $\on{char}(k)=0$, or
\item $\on{char}(k)=p>\sum_i\on{rk}(\mathscr{E}_i)$, and each $\mathscr{E}_i$ lifts to the canonical (toric) lift of $X$ to $W_2(k)$,
\end{enumerate}
then $$H^s(X, j_*\Omega^q_U\otimes \on{Sym}^{a_1}\mathscr{E}_1\otimes\cdots \otimes \on{Sym}^{a_m}(\mathscr{E}_m)\otimes \bigwedge^{b_1}\mathscr{E}_1\otimes \cdots \otimes \bigwedge^{b_m} \mathscr{E}_m)=0$$ for $s > \sum_{i=1}^m (\on{rk}(\mathscr{E}_i)-b_i).$
\end{restatable*}
Note that the lifting condition in part (2) is automatically satisfied by toric vector bundles, as they are defined combinatorially.

This result generalizes several results in the literature.  Danilov states (without proof) the special case where $m=1$, $\mathscr{E}_1=\mathscr{L}$ is an ample line bundle and $a_1=0, b_1=1$ \cite[Theorem 7.5.2]{danilov}.  This is proven in the case $X$ is simplicial by Batyrev and Cox \cite{batyrevetal} and in general in \cite{buchetal}.  Manivel \cite{manivel} proves a version of this theorem in characteristic zero, when $X$ is smooth.  Theorem \ref{thm:bott-etc} is a strengthening  (for toric varieties) of his famous vanishing theorem \cite{manivel2}.  See also \cite{brion, mustata, heringetal} for related work.

\subsection{Acknowlegements}  This paper benefited from useful discussions with Donu Arapura, Adam Block, Johan de Jong, Daniel Halpern-Leistner, and John Ottem.
\section{Proof of Theorem \ref{thm:liftable-twist} and Corollary \ref{thm:frob-lift}}
\subsection{The Cartier Isomorphism}
We first recall the main theorem of \cite{deligne-illusie}:
\begin{theorem}[Deligne-Illusie]\label{di-qi}
Let $k$ be a perfect field of characteristic $p>0$ and $S$ a $k$-scheme.  Suppose that $S$ admits a flat lift $\widetilde{S}$ over $W_2(k)$.  Let $X$ be a smooth $S$-scheme with $\dim(X/S)<p$.  Then there is an isomorphism $$F_{X/S*}\Omega^\bullet_{X/S} \simeq \bigoplus \Omega^i_{X'/S}[-i]$$ in $D(X')$ if and only if $X'$ admits a flat lift over $\widetilde{S}$.
\end{theorem}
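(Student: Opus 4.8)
This is the decomposition theorem of Deligne--Illusie \cite{deligne-illusie}, so the plan is to reproduce their argument; I indicate the structure, since it is also the source of the ``if and only if.'' The crux is to produce, for each flat lift $\widetilde X'$ of $X'$ over $\widetilde S$, a map $\varphi^1\colon \Omega^1_{X'/S}[-1]\to F_{X/S*}\Omega^\bullet_{X/S}$ in $D(X')$ inducing the inverse Cartier isomorphism on $\mathcal H^1$, and then to extend it multiplicatively: since $F_{X/S*}\Omega^\bullet_{X/S}$ is a graded-commutative dg algebra over $\mathcal O_{X'}$, one forms $\varphi^i\colon \Omega^i_{X'/S}[-i]\to F_{X/S*}\Omega^\bullet_{X/S}$ by wedging $i$ copies of $\varphi^1$, and multiplicativity of the Cartier isomorphism shows $\varphi^i$ induces $C^{-1}$ on $\mathcal H^i$. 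Hence $\bigoplus_i\varphi^i$ is a quasi-isomorphism of $\bigoplus_i\Omega^i_{X'/S}[-i]$ onto $\tau_{<p}F_{X/S*}\Omega^\bullet_{X/S}$, and the hypothesis $\dim(X/S)<p$ forces $\tau_{<p}F_{X/S*}\Omega^\bullet_{X/S}=F_{X/S*}\Omega^\bullet_{X/S}$; this degree bound is exactly where the characteristic enters, and it cannot be removed in general.

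Constructing $\varphi^1$ locally is the heart of the matter. On a small open $U\subseteq X$, smoothness of $X/S$ lets me choose a flat lift $\widetilde X_U$ over $\widetilde S$, and together with the restriction $\widetilde X'_U$ of $\widetilde X'$ I can choose a lift $\widetilde F_U\colon \widetilde X_U\to \widetilde X'_U$ of relative Frobenius (the obstruction to such a lift lies in a sheaf-Ext group and vanishes on a small enough $U$). Because $F_{X/S}$ kills Kähler differentials, $\widetilde F_U^{\,*}$ carries $1$-forms into $p\,\Omega^1_{\widetilde X_U/\widetilde S}$ and $2$-forms into $p^2\,\Omega^2_{\widetilde X_U/\widetilde S}$; consequently $\tfrac1p\widetilde F_U^{\,*}$ reduces mod $p$ to a well-defined $\mathcal O_{X'}$-linear map $\Omega^1_{X'_U/S}\to \mathcal Z^1_{X_U/S}$ into \emph{closed} $1$-forms, and this is an honest two-term map of complexes $\Omega^1_{X'_U/S}[-1]\to \tau_{\le 1}F_{X/S*}\Omega^\bullet_{X/S}$ realizing $C^{-1}$ on $\mathcal H^1$. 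Wedge powers of these local $\varphi^1_U$ give local $\varphi^i_U$ as above.

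The main obstacle --- and what yields both directions --- is gluing. Two choices of $(\widetilde X_U,\widetilde F_U)$ change $\varphi^1_U$ by an explicit chain homotopy valued in $\mathcal{H}om(\Omega^1_{X'/S},F_{X/S*}\mathcal O_X)$, so the $\varphi^1_U$ patch to a global $\varphi^1$ in $D(X')$ precisely when a certain obstruction class in $H^2(X',\mathcal T_{X'/S})$ vanishes; and one checks (this is the delicate bookkeeping) that this class is, up to sign and the Cartier identifications, the classical obstruction to lifting $X'$ flatly over $\widetilde S$. Thus a global $\varphi^1$, hence the decomposition, exists exactly when $X'$ lifts. Conversely, any decomposition $F_{X/S*}\Omega^\bullet_{X/S}\simeq\bigoplus_i\Omega^i_{X'/S}[-i]$ splits $\tau_{\le 1}F_{X/S*}\Omega^\bullet_{X/S}$, which by the same obstruction comparison forces the lifting obstruction to vanish. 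I expect the obstruction comparison --- reconciling the \v{C}ech-theoretic lifting cocycle with the homotopy-theoretic ambiguity of $\varphi^1$, with all signs and Cartier identifications correct --- to be the only genuinely delicate point; the rest is formal once the Cartier isomorphism is in hand.
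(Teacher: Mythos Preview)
Your sketch is a faithful outline of the Deligne--Illusie argument, but note that the paper does not prove this theorem at all: it is quoted as the main theorem of \cite{deligne-illusie} and used as a black box. So there is nothing to compare against --- you have supplied a proof where the paper supplies only a citation. Your outline correctly identifies the key ingredients (local construction of $\varphi^1$ from a Frobenius lift via $\tfrac{1}{p}\widetilde F^*$, multiplicative extension to $\varphi^i$, the role of $\dim(X/S)<p$ in killing the truncation, and the identification of the gluing obstruction with the deformation obstruction for $X'$), and these are indeed the steps in \cite{deligne-illusie}. For the purposes of this paper, a one-line citation would have sufficed.
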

Here $X'$ is the Frobenius twist of $X$, i.e. it fits into the Cartesian diagram 
$$\xymatrix{
X' \ar[r]\ar[d] & X\ar[d]\\
S \ar[r]^{\on{Frob}} & S
}$$
and $F_{X/S}: X\to X'$ is the relative Frobenius.  

Let $k$ be a perfect field of characteristic $p>0$, and $S$ a $k$-scheme admitting a flat lift $\widetilde{S}$ to $W_2(k)$.  Now suppose $\mathscr{E}$ is a vector bundle on $S$ and set $$\mathbb{E}=\underline{\on{Spec}}_S(\on{Sym}^*(\mathscr{E}))$$ to be the total space of $\mathscr{E}$; let $$\pi: \mathbb{E}\to S$$ be the structure morphism.  Then the Frobenius twist $\mathbb{E}'$ of $\mathbb{E}$ is $$\mathbb{E}'\simeq \underline{\on{Spec}}_S(\on{Sym}^*(\mathscr{E}^{(p)})).$$  Consider the $\mathscr{O}_S$-linear complex $$\pi_*\Omega^\bullet_{\mathbb{E}/S}: 0\to \on{Sym}^*(\mathscr{E})\to \on{Sym}^*(\mathscr{E})\otimes \mathscr{E}\to \on{Sym}^*(\mathscr{E})\otimes \bigwedge^2\mathscr{E}\to \cdots$$
The natural $\mathbb{G}_m$-action on $\mathbb{E}$ makes this into a graded complex; let $$\Omega^\bullet_{\mathbb{E}/S}(n): 0\to \on{Sym}^n(\mathscr{E})\to \on{Sym}^{n-1}(\mathscr{E})\otimes \mathscr{E} \to \cdots\to \on{Sym}^{n-i}(\mathscr{E})\otimes \bigwedge^i\mathscr{E}\to \cdots$$ be the $n$-th graded piece.  By a result of Cartier (see e.g. \cite[Section 7.2]{katz}), there is a natural isomorphism $$\mathscr{H}^i(\Omega^\bullet_{\mathbb{E}/S}(pn))\simeq \on{Sym}^{n-i}(\mathscr{E}^{(p)})\otimes \bigwedge^i \mathscr{E}^{(p)}.$$
By Theorem \ref{di-qi}, this isomorphism may be promoted to an isomorphism in $D^b(S)$ if $\mathscr{E}^{(p)}$ admits a lift to $\widetilde{S}$, and $\on{rk}(\mathscr{E})<p$.  In this case, we have an isomorphism $$\Omega^\bullet_{\mathbb{E}/S}(pn)\simeq \bigoplus_i \on{Sym}^{n-i}(\mathscr{E}^{(p)})\otimes \bigwedge^i \mathscr{E}^{(p)}[-i]$$ in $D^b(S)$. 
\subsection{The Main Theorem}
We are now ready to prove:
\liftablethm
\begin{proof}
Let $\mathscr{F}$ be any coherent sheaf on $X$ and $j\geq 0$ an integer; we wish to show that for $i>\on{rk}(\mathscr{E})-j$, and $m\gg0$, $$H^i(X, \on{Sym}^{n-j}(\mathscr{E}^{(p^m)})\otimes \bigwedge^j \mathscr{E}^{(p^m)}\otimes \mathscr{F})=0.$$
Choose $r\gg 0$ so that $$H^i(X, \on{Sym}^{p^rn-j}(\mathscr{E}^{(p^{N})})\otimes \bigwedge^j \mathscr{E}^{(p^{N})}\otimes \mathscr{F})=0$$ for all $j$ and all $i>0.$  Such an $r$ exists by the ampleness of $\mathscr{E}^{(p^{N})}.$

We claim that 
\begin{equation}\label{mainclaim} 
H^i(X, \on{Sym}^{p^{r-s}n-j}(\mathscr{E}^{(p^{N+s})})\otimes \bigwedge^j \mathscr{E}^{(p^{N+s})}\otimes \mathscr{F})=0 
\end{equation} 
for all $0\leq s\leq r$ and all $i> \on{rk}(\mathscr{E})-j.$  We will prove this by induction on $s$; the case $s=0$ is immediate from our choice of $r$.

Assume that Equation \ref{mainclaim} holds for some $s$; we will prove it for $s+1$.  From the first hypercohomology spectral sequence (associated to the stupid filtration of $\Omega^\bullet_{\mathbb{E}/X}(p^{r-s}n)^{(p^{N+s})}$) $$E_1^{p,q}=H^p(X, \on{Sym}^{p^{r-s}n-q}(\mathscr{E}^{(p^{N+s})})\otimes \bigwedge^q \mathscr{E}^{(p^{N+s})}\otimes \mathscr{F})\implies \mathbb{H}^{p+q}(X, \Omega^\bullet_{\mathbb{E}/X}(p^{r-s}n)^{(p^{N+s})}\otimes \mathscr{F})$$
we have that $$\mathbb{H}^i(X, \Omega^\bullet_{\mathbb{E}/X}(p^{r-s}n)^{(p^{N+s})}\otimes \mathscr{F})=0$$ for $i>\on{rk}(\mathscr{E}),$ by the induction hypothesis.

Now consider the second hypercohomology spectral sequence $$E_2^{p,q}=H^p(X, \mathscr{H}^q(\Omega^\bullet_{\mathbb{E}/X}(p^{r-s}n)^{(p^{N+s})}\otimes \mathscr{F}))\implies \mathbb{H}^{p+q}(X, \Omega^\bullet_{\mathbb{E}/X}(p^{r-s}n)^{(p^{N+s})}\otimes \mathscr{F}).$$  As $\mathscr{E}^{(p^N)}$ is assumed to admit a lift to $W_2$, this spectral sequence is degenerate by Theorem \ref{di-qi}.  Thus $E^{p,q}_2=0$ for $p+q>\on{rk}(\mathscr{E}).$  But by the Cartier isomorphism, 
\begin{align*}
E_2^{p,q}&=H^p(X, \mathscr{H}^q(\Omega^\bullet_{\mathbb{E}/X}(p^{r-s}n)^{(p^{N+s})}\otimes \mathscr{F}))\\
&=H^p(X, \on{Sym}^{p^{r-s-1}n-q}(\mathscr{E}^{(p^{N+s+1})})\otimes \bigwedge^q \mathscr{E}^{(p^{N+s+1})}\otimes \mathscr{F})
\end{align*}
completing the induction step and proving the claim in Equation \ref{mainclaim}.  Taking $r=s$ in \ref{mainclaim} completes the proof.
\end{proof}
The corollary follows easily:
\frobcor
\begin{proof}
By Theorem \ref{thm:liftable-twist}, it suffices to show that $\mathscr{E}^{(p)}$ lifts to $X_2$.  But $\mathscr{E}$ lifts to a vector bundle $\mathscr{E}_2$ on $X_2$, and Frobenius lifts to a $\mathbb{Z}/p^2\mathbb{Z}$-endomorphism of $X_2$, say $F_2$.  Then $F_2^*\mathscr{E}_2$ is a lift of $\mathscr{E}^{(2)}$, as desired.
\end{proof}
\begin{corollary}\label{general-bound}
Let $k, X, X_2$ be as in Corollary \ref{thm:frob-lift}.  Let $\mathscr{E}_1, \cdots, \mathscr{E}_m$ be ample vector vector bundles on $X$ which lift to $X_2$, with $\sum_s\on{rk}(\mathscr{E}_s)<\on{char}(k)$.  Then $$\phi(\on{Sym}^{a_1}\mathscr{E}_1\otimes\cdots \otimes \on{Sym}^{a_m}(\mathscr{E}_m)\otimes \bigwedge^{b_1}\mathscr{E}_1\otimes \cdots \otimes \bigwedge^{b_m} \mathscr{E}_m)\leq \sum_{i=1}^m (\on{rk}(\mathscr{E}_i)-b_i).$$  
\end{corollary}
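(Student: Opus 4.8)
The plan is to deduce the statement from the single-bundle case, Corollary~\ref{thm:frob-lift}, applied to the direct sum $\mathscr{E}:=\mathscr{E}_1\oplus\cdots\oplus\mathscr{E}_m$. First I would check that $\mathscr{E}$ satisfies the hypotheses of Corollary~\ref{thm:frob-lift}: a direct sum of ample vector bundles is ample; $\on{rk}(\mathscr{E})=\sum_i\on{rk}(\mathscr{E}_i)<\on{char}(k)=p$ by assumption; and $\mathscr{E}$ lifts to $X_2$, since if $\mathscr{E}_{i,2}$ is a lift of $\mathscr{E}_i$ then $\bigoplus_i\mathscr{E}_{i,2}$ is a lift of $\mathscr{E}$. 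Since $X_2$ carries a lift of absolute Frobenius by hypothesis, Corollary~\ref{thm:frob-lift} yields
\[ \phi\!\left(\on{Sym}^{n-j}(\mathscr{E})\otimes\bigwedge^j(\mathscr{E})\right)\ \le\ \on{rk}(\mathscr{E})-j\qquad\text{for all } n\ge j\ge 0. \]

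Next I would expand the left-hand bundle. The identities $\on{Sym}^{\bullet}(\mathscr{V}\oplus\mathscr{W})\cong\on{Sym}^{\bullet}(\mathscr{V})\otimes\on{Sym}^{\bullet}(\mathscr{W})$ and $\bigwedge^{\bullet}(\mathscr{V}\oplus\mathscr{W})\cong\bigwedge^{\bullet}(\mathscr{V})\otimes\bigwedge^{\bullet}(\mathscr{W})$ of graded $\mathscr{O}_X$-algebras hold in every characteristic; iterating them over the $m$ summands and extracting graded pieces gives
\[ \on{Sym}^{n-j}(\mathscr{E})\otimes\bigwedge^j(\mathscr{E})\ \cong\ \bigoplus\ \on{Sym}^{a_1}\mathscr{E}_1\otimes\cdots\otimes\on{Sym}^{a_m}\mathscr{E}_m\otimes\bigwedge^{b_1}\mathscr{E}_1\otimes\cdots\otimes\bigwedge^{b_m}\mathscr{E}_m, \]
the sum running over all $(a_i),(b_i)$ with $\sum_i a_i=n-j$ and $\sum_i b_i=j$. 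Hence, given prescribed nonnegative integers $a_1,\dots,a_m,b_1,\dots,b_m$, setting $j:=\sum_i b_i$ and $n:=j+\sum_i a_i$ exhibits $\mathscr{G}:=\on{Sym}^{a_1}\mathscr{E}_1\otimes\cdots\otimes\on{Sym}^{a_m}\mathscr{E}_m\otimes\bigwedge^{b_1}\mathscr{E}_1\otimes\cdots\otimes\bigwedge^{b_m}\mathscr{E}_m$ as a direct summand of $\on{Sym}^{n-j}(\mathscr{E})\otimes\bigwedge^j(\mathscr{E})$.

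Finally I would use that $\phi$ is monotone under passing to direct summands. If $\mathscr{G}$ is a summand of $\mathscr{H}$, then since $(-)^{(p^t)}$ is pullback along a morphism $X\to X$ it is additive, hence carries a splitting $\mathscr{H}\cong\mathscr{G}\oplus\mathscr{G}'$ to a splitting of $\mathscr{H}^{(p^t)}$; tensoring with an arbitrary coherent $\mathscr{F}$ and taking cohomology then realizes $H^{\ell}(X,\mathscr{G}^{(p^t)}\otimes\mathscr{F})$ as a direct summand of $H^{\ell}(X,\mathscr{H}^{(p^t)}\otimes\mathscr{F})$, so $\phi(\mathscr{G})\le\phi(\mathscr{H})$. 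Combining the three steps,
\[ \phi(\mathscr{G})\ \le\ \phi\!\left(\on{Sym}^{n-j}(\mathscr{E})\otimes\bigwedge^j(\mathscr{E})\right)\ \le\ \on{rk}(\mathscr{E})-j\ =\ \sum_{i=1}^m\on{rk}(\mathscr{E}_i)-\sum_{i=1}^m b_i\ =\ \sum_{i=1}^m\bigl(\on{rk}(\mathscr{E}_i)-b_i\bigr), \]
as claimed (when some $b_i>\on{rk}(\mathscr{E}_i)$ the bundle $\mathscr{G}$ vanishes and there is nothing to prove). I do not expect a genuine obstacle here: essentially all of the work is already in Corollary~\ref{thm:frob-lift}, and what remains is the formal observation that the plethysm of $\bigoplus_i\mathscr{E}_i$ into $\on{Sym}\otimes\bigwedge$ produces, up to direct summands, every mixed symmetric--exterior tensor in the $\mathscr{E}_i$, together with the characteristic-free additivity and multiplicativity of Frobenius pullback.
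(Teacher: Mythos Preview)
Your proof is correct and follows exactly the paper's approach: form the direct sum $\mathscr{E}=\bigoplus_i\mathscr{E}_i$, apply Corollary~\ref{thm:frob-lift} to $\on{Sym}^a(\mathscr{E})\otimes\bigwedge^b\mathscr{E}$ with $a=\sum a_i$, $b=\sum b_i$, and observe that the target bundle is a direct summand. You have simply spelled out the verifications (ampleness and liftability of the sum, the plethysm decomposition, and monotonicity of $\phi$ under summands) that the paper leaves implicit.
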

\begin{proof}
Let $\mathscr{E}=\bigoplus_i \mathscr{E}_i$; let $a=\sum_i a_i, b=\sum_i b_i$.  Then by Corollary \ref{thm:frob-lift}, we have that $$\phi(\Sym^a(\mathscr{E})\otimes \bigwedge^b \mathscr{E})\leq \on{rk}(\mathscr{E})-b=\sum_{i=1}^m (\on{rk}(\mathscr{E}_i)-b_i).$$
But $$\on{Sym}^{a_1}\mathscr{E}_1\otimes\cdots \otimes \on{Sym}^{a_m}(\mathscr{E}_m)\otimes \bigwedge^{b_1}\mathscr{E}_1\otimes \cdots \otimes \bigwedge^{b_m} \mathscr{E}_m$$ is a direct summand of $\Sym^a(\mathscr{E})\otimes \bigwedge^b \mathscr{E}$ so the result follows.
\end{proof}
\section{Proof of Theorem \ref{arapurathm}}\label{arapura-proof-section}
We now give a short proof of Theorem \ref{arapurathm}, originally due to Arapura \cite[Theorem 6.1]{arapura1}.  
\arapurathmrestate
\begin{proof}
By \cite[Lemma 3.3]{arapura1}, there exists $N_0$ such that for all $n>N_0$, $$\phi(\on{Sym}^{n-i}(\mathscr{E})\otimes \bigwedge^i\mathscr{E})=0$$ for all $i$ (using the ampleness of $\mathscr{E}$).  Now let $R$ be a finite-type $\mathbb{Z}$-algebra, with a map $R\to k$ and $(\mathcal{X}, \widetilde{\mathscr{E}})$ a finite-type $R$-scheme with a vector bundle so that $\mathcal{X}_k\simeq X, \widetilde{\mathscr{E}}_k\simeq \mathscr{E}$, and such that $$\phi(\on{Sym}^{n-i}(\widetilde{\mathscr{E}}_{\mathfrak{q}})\otimes \bigwedge^i\widetilde{\mathscr{E}}_{\mathfrak{q}}))=0$$ for all closed points $\mathfrak{q}\in \on{Spec}(R)$ (such a model $(R, \mathcal{X}, \widetilde{\mathscr{E}})$ exists by the definition of $\phi$).  

Let $\mathscr{F}$ be any coherent sheaf on $X$; after replacing $(R, \mathcal{X}, \widetilde{\mathscr{E}})$ by a refinement, we may assume $\mathscr{F}$ extends to a coherent sheaf $\widetilde{\mathscr{F}}$ $\mathcal{X}$.  We claim that for closed points $\mathfrak{q}\in \Spec(R)$ with $p=\on{char}(\kappa(\mathfrak{q}))>N_0$, $j\geq \on{rk}(\mathscr{E})$, and for $m\gg 0$, $$H^j(\mathcal{X}_{\mathfrak{q}}, \widetilde{\mathscr{E}}^{(p^m)}\otimes \widetilde{\mathscr{F}})=0,$$ which clearly suffices.

Consider the hypercohomology spectral sequence $$E_2^{i,j}=H^i(X_{\mathfrak{q}}, \mathscr{H}^j(\Omega^\bullet_{\widetilde{\mathbb{E}}/{X}_{\mathfrak{q}}}(p)^{(p^R)}\otimes \widetilde{\mathscr{F}}))\implies \mathbb{H}^{i+j}(X_{\mathfrak{q}}, \Omega^\bullet_{\widetilde{\mathbb{E}}/{X}_{\mathfrak{q}}}(p)^{(p^R)}\otimes \widetilde{\mathscr{F}}).$$
By the previous paragraph,  the right hand side vanishes for $R\gg 0$, $i+j>\on{rk}(\mathscr{E})$.  On the other hand, $$\mathscr{H}^j(\Omega^\bullet_{\widetilde{\mathbb{E}}/{X}_{\mathfrak{q}}}(p)^{(p^R)}\otimes \widetilde{\mathscr{F}})=\begin{cases} \widetilde{\mathscr{E}}_{\mathfrak{q}}^{(p^{R+1})}\otimes \widetilde{\mathscr{F}} & \text{if $j=0,1$}\\ 0 & \text{otherwise.} \end{cases}$$  Moreover, the only non-zero differentials are on the $E_2$ page for degree reasons, i.e. $$d_2^i: H^i(X_{\mathfrak{q}},\widetilde{\mathscr{E}}_{\mathfrak{q}}^{(p^{R+1})}\otimes \widetilde{\mathscr{F}})\to H^{i+2}(X_{\mathfrak{q}}, \widetilde{\mathscr{E}}_{\mathfrak{q}}^{(p^{R+1})}\otimes \widetilde{\mathscr{F}}).$$  For $R\gg 0$, and $i>\on{rk}(\mathscr{E})-1$, these differentials are thus isomorphisms. Thus for $R\gg 0$, $i>\on{rk}(\mathscr{E})-1$ $$H^i(X_{\mathfrak{q}},\widetilde{\mathscr{E}}_{\mathfrak{q}}^{(p^{R+1})}\otimes \widetilde{\mathscr{F}})=0$$ by backwards induction on $i$, as desired.
\end{proof}
\begin{remark}\label{schur-remark}
In \cite[Proof of Theorem 6.1]{arapura1}, Arapura gives a proof of Theorem \ref{arapurathm} using a resolution of $\mathscr{E}^{(p)}$ by Schur functors, from \cite{carter-lusztig}.  This complex is a subcomplex of $\Omega^\bullet_{\widetilde{\mathbb{E}}/X}(p)$.  Arapura's proof requires the Kempf vanishing theorem as input; we are able to avoid it because of our use of symmetric powers as opposed to other Schur functors.
\end{remark}
\section{Applications}
Let $k$ be a perfect field and $X$ a variety over $k$.  We say that $X$ admits a Frobenius lift if $X$ admits a flat lift $X_2$ to $W_2(k)$, and if absolute Frobenius lifts to $X_2$ as a $\mathbb{Z}/p^2\mathbb{Z}$-morphism.
We first prove the following easy theorem:
\begin{theorem}\label{vanishing}
Let $k$ be a perfect field of characteristic $p>0$, and $X$ a normal projective $k$-variety which admits a Frobenius lift; let $j: U\to X$ be the inclusion of the non-singular locus.  Let $\mathscr{E}$ be a vector bundle on $X$.   Then for all $i$ and all $q> \phi(\mathscr{E})$, $$H^q(X, \mathscr{E}\otimes j_*\Omega^i_{U/k})=0.$$
\end{theorem}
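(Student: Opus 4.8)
The plan is to derive the vanishing from the definition of $\phi(\mathscr{E})$ by an argument parallel to the proofs of Theorems \ref{arapurathm} and \ref{thm:liftable-twist}, carried out on the smooth locus $U$ and then pushed forward to $X$.

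First I would restrict the lift. Since $U\subset X$ is open, its preimage $U_2\subset X_2$ is an open, flat lift of $U$ over $W_2(k)$; as $U$ is smooth, $U_2$ is smooth over $W_2(k)$, and the lift of absolute Frobenius restricts to $U_2$ (Frobenius is topologically the identity, so it preserves the open $U_2$). Thus $U$ itself admits a Frobenius lift. Consequently, by the construction of \cite{deligne-illusie} in the presence of a \emph{global} lift of Frobenius — which requires no bound on $\dim U$, the dimension hypothesis in Theorem \ref{di-qi} being only what is needed to glue local lifts — for every vector bundle $\mathscr{V}$ on $U$ the twisted de Rham complex $F_{U*}(F_U^*\mathscr{V}\otimes\Omega^\bullet_{U/k})$ is quasi-isomorphic in $D^b(U)$ to $\bigoplus_m(\mathscr{V}\otimes\Omega^m_{U/k})[-m]$. (I suppress the Frobenius semilinear twists throughout; since $k$ is perfect these do not affect vanishing of cohomology.) Taking top-degree cohomology, $\Omega^{d}_{U/k}$ (with $d=\dim X$) is an honest direct summand of $F_{U*}\Omega^{d}_{U/k}$: since every top form is closed, the Deligne--Illusie section $\Psi^d$ splits the Cartier operator $C\colon F_{U*}\Omega^d_{U/k}\twoheadrightarrow\Omega^d_{U/k}$.

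Next I would bootstrap this to all $i$ and all Frobenius powers. Using the perfect pairing $\Omega^i_{U/k}\cong\Omega^d_{U/k}\otimes\bigwedge^{d-i}\mathscr{T}_{U/k}$ on the smooth $U$, the previous paragraph and the projection formula give, for every $n$, that $\mathscr{E}|_U\otimes\Omega^i_{U/k}$ is a direct summand of $F^n_{U*}\big((F^n_U)^*(\mathscr{E}|_U\otimes\bigwedge^{d-i}\mathscr{T}_{U/k})\otimes\Omega^d_{U/k}\big)$. Applying $j_*$ — and using $j\circ F_U=F_X\circ j$, the fact that $F_X$ is finite (so $F_{X*}$ is exact), and normality of $X$ (so $X\setminus U=\on{Sing}(X)$ has codimension $\ge 2$ and $j_*$ of a locally free sheaf on $U$ is a coherent reflexive sheaf on $X$) — one gets that $\mathscr{E}\otimes j_*\Omega^i_{U/k}$ is a direct summand of $F^n_{X*}\big(\mathscr{E}^{(p^n)}\otimes\mathscr{B}_n\big)$ for a suitable coherent sheaf $\mathscr{B}_n$ on $X$. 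Since $F^n_X$ is affine, $H^q(X,F^n_{X*}(\mathscr{E}^{(p^n)}\otimes\mathscr{B}_n))=H^q(X,\mathscr{E}^{(p^n)}\otimes\mathscr{B}_n)$, and it remains to see this vanishes for $q>\phi(\mathscr{E})$ once $n$ is large — which is exactly the defining property of $\phi(\mathscr{E})$ applied to the coherent sheaf $\mathscr{B}_n$.

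The step I expect to require the most care — and the main obstacle — is precisely the interaction of the non-exact functor $j_*$ with the formalities above: identifying the relevant cohomology sheaves of the pushed-forward de Rham complex with $\mathscr{E}^{(p^s)}\otimes j_*\Omega^m_{U/k}$, and controlling the auxiliary sheaves $\mathscr{B}_n$ well enough to invoke the definition of $\phi(\mathscr{E})$, since a priori $\mathscr{B}_n$ varies with $n$ and one must arrange either that it ranges over a bounded family or that the coherent sheaf fed to $\phi$ can be taken fixed. Here I would exploit normality of $X$ together with the fact that the Deligne--Illusie splitting is globally defined (built from $\tfrac1p\,dF_2$, which makes sense on all of $X_2$). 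Alternatively, and perhaps more cleanly, one can bypass the summand argument and run the two hypercohomology spectral sequences of $j_*\big(F_{X*}^{\,s\,*}\mathscr{E}|_U\otimes F_{U*}\Omega^\bullet_{U/k}\big)$ directly on $X$ — the stupid-filtration one bounded by an inductive hypothesis, the Cartier one degenerate by Deligne--Illusie — exactly as in the proofs of Theorems \ref{arapurathm} and \ref{thm:liftable-twist}; the extra bookkeeping relative to those proofs stems from the cotangent de Rham complex of $U$ having length $d=\dim X$ rather than being truncated at $\on{rk}(\mathscr{E})$.
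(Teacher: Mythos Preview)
Your proposal correctly identifies its own main obstacle, and that obstacle is fatal as written: the auxiliary sheaf $\mathscr{B}_n$ genuinely varies with $n$, so the definition of $\phi(\mathscr{E})$ --- which requires a \emph{fixed} coherent sheaf --- does not apply. Your suggested fix (a) (bounded family) has no evident reason to hold, and fix (b) (spectral sequences as in Theorems \ref{arapurathm} and \ref{thm:liftable-twist}) runs into the problem you note yourself: the de Rham complex of $U$ has length $d=\dim X$, so the argument would only produce vanishing in degrees $>d$, which is vacuous.

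The paper's proof avoids all of this by using a stronger input than the derived Deligne--Illusie decomposition. With a \emph{global} lift of Frobenius, \cite[Theorem 2]{buchetal} shows that $\Omega^i_U\hookrightarrow F_*\Omega^i_U$ is a split monomorphism of sheaves for \emph{every} $i$, not just $i=d$. (Your argument that the Cartier operator $C$ is globally defined on $F_*\Omega^d$ because top forms are closed is exactly why $i=d$ is easy; for $i<d$ one needs the duality pairing with $\Omega^{d-i}$ to produce the retraction, which is what \cite{buchetal} does.) Once you have this, there is no need for the detour through $\Omega^d\otimes\bigwedge^{d-i}\mathscr{T}$: pushing forward along $j$ and tensoring with $\mathscr{E}^{(p^r)}$ yields, via the projection formula, a split injection
\[
H^q(X,\mathscr{E}^{(p^r)}\otimes j_*\Omega^i_U)\hookrightarrow H^q(X,\mathscr{E}^{(p^{r+1})}\otimes j_*\Omega^i_U),
\]
where the coherent sheaf $j_*\Omega^i_U$ is \emph{fixed}. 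Now the definition of $\phi(\mathscr{E})$ applies directly to give vanishing for $r\gg 0$, and backwards induction on $r$ finishes. The whole proof is three lines.
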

\begin{proof}
We follow the proof of \cite[Theorem 3]{buchetal}.  By \cite[Theorem 2]{buchetal}, there is a split monomorphism of abelian sheaves $$0\to \Omega^i_{U}\to F_*\Omega^i_{U},$$ and hence pushing forward to $X$, a split monomorphism $$0\to j_*\Omega^i_{U}\to F_*j_*\Omega^i_{U}$$ (using that $j$ and $F$ commute).  Tensoring with $\mathscr{E}^{(p^r)}$, we obtain injections $$H^i(X,  j_*\Omega^i_{U}\otimes \mathscr{E}^{(p^r)})\hookrightarrow H^i(X, F_*j_*\Omega^i_{U}\otimes\mathscr{E}^{(p^r)})\simeq H^i(X, j_*\Omega^i_U\otimes \mathscr{E}^{(p^{r+1})})$$ for all $i$, where the latter isomorphism uses the projection formula and the fact that $F$ is affine.  Now we are done by backwards induction on $r$, by the definition of $\phi(\mathscr{E})$.
\end{proof}
We may immediately conclude:
\bottetc
\begin{proof}
By a standard spreading-out argument, it suffices to prove the characteristic $p$ statement.  But this is immediate from Theorem \ref{vanishing} and Corollary \ref{general-bound}.
\end{proof}
\bibliographystyle{alpha}
\bibliography{amplitude}

\end{document}